\documentclass[12pt]{article}

\usepackage{amsmath,amsthm,amssymb}
\usepackage{lmodern}
\usepackage{hyperref}
\usepackage{microtype}
\usepackage{mathtools}
\usepackage[utf8]{inputenc}
\usepackage[T1]{fontenc}
\usepackage{booktabs}

\newtheorem{theorem}{Theorem}[section]
\newtheorem{fact}[theorem]{Fact}
\newtheorem{lemma}[theorem]{Lemma}

\theoremstyle{definition}

\newtheorem{remark}[theorem]{Remark}

\newcommand{\Met}{\operatorname{Met}}
\newcommand{\Fsigma}{F_{\!\sigma}}
\newcommand{\Gdelta}{G_{\!\delta}}
\newcommand{\Cont}{\mathfrak{c}}

\title{On Ishiki's Conjecture:\\
       $\Met(D)$ Is Not Completely Metrizable for
       $\lvert D\rvert=\aleph_1$}
\author{Tomoki Uda\thanks{Nanzan University.
  Email: \texttt{uda0@nanzan-u.ac.jp}.
  ORCID: \href{https://orcid.org/0000-0002-6073-1572}{0000-0002-6073-1572}.}}
\date{}

\begin{document}
\maketitle

\begin{abstract}
  For a discrete topological space $D$, let $\Met(D)$ denote the set of
  metrics on $D$ that are compatible with the discrete topology, equipped
  with the topology induced by the supremum distance. Ishiki's
  Conjecture~5.1 asserts that $\Met(D)$ is not completely metrizable when
  $\lvert D\rvert = \aleph_1$. We prove this in ZFC by constructing a
  set $A \subseteq [0,1]$ of cardinality $\aleph_1$ that is not
  $\Fsigma$ and embedding its complement as a closed subspace of $\Met(D)$.
  The proof has also been formalised in Lean~4.
\end{abstract}

% ------------------------------------------------------------------
\section{Introduction}
\label{sec:intro}

Let $D$ be an infinite discrete space, and let $\Met(D)$ denote the set
of metrics on $D$ compatible with its topology. We equip $\Met(D)$ with
the topology induced by the supremum distance
\[
  \rho(d, d') = \sup_{x,y\in D} \lvert d(x,y) - d'(x,y)\rvert ,
\]
which may take the value $+\infty$ for unbounded $d,d'$; this does not
affect complete metrizability, which is a property of a topology and not
of any particular compatible metric witnessing it (see
Remark~\ref{rem:extended-rho}).

Koshino~\cite{Koshino2024} proved that a separable metrizable space $X$
is $\sigma$-compact if and only if the space of bounded admissible
metrics on $X$, with the supremum metric, is completely metrizable (the
corollary in \cite{Koshino2024}). Ishiki~\cite{Ishiki2024} states the
corresponding characterisation in terms of $\Met(X)$: for separable $X$,
$\Met(X)$ is completely metrizable if and only if $X$ is $\sigma$-compact.
Conjecture~5.1 of \cite{Ishiki2024} is the
resulting question for the least uncountable case:

\begin{quote}
  \emph{If $\lvert D\rvert = \aleph_1$, then $\Met(D)$ is not completely
  metrizable.}
\end{quote}

We prove this conjecture. The source paper also
raises several further questions about $\Met(D)$ for other cardinalities
and other topological properties; those are outside the scope of this
paper, which addresses only Conjecture~5.1 as stated above.

The proof strategy is the following. Working in ZFC, we first construct
a set $A \subseteq [0,1]$ with
$\lvert A\rvert = \aleph_1$ that is not an $\Fsigma$ subset of $[0,1]$
(Lemma~\ref{lem:A-exists}). Its complement $A^\complement =
[0,1]\setminus A$ is then not a $\Gdelta$ subset of $[0,1]$, hence not
completely metrizable as a subspace of $[0,1]$. We then build an explicit
family $(d_t)_{t\in[0,1]}$ of metrics on a discrete space $D$ of
cardinality $\aleph_1$ that lie in $\Met(D)$ exactly for $t \in
A^\complement$ (Lemma~\ref{lem:star-metric}), and show that $t \mapsto
d_t$ restricts to a bi-Lipschitz embedding of $A^\complement$ onto a
\emph{closed} subset of $\Met(D)$ (Lemma~\ref{lem:closed-image}).
If $\Met(D)$ were completely metrizable, then the image
$\varphi[A^\complement]$, being closed in $\Met(D)$, would also be
completely metrizable, and hence so would $A^\complement$.
This contradicts the fact that $A^\complement$ is not completely
metrizable and proves Theorem~\ref{thm:conj51}.

\section{Preliminaries}
\label{sec:prelim}

\subsection{Notation}

Throughout, $\omega_1$ denotes the first uncountable ordinal, and
$\aleph_1 = \lvert\omega_1\rvert$ the first uncountable cardinal.
$\Cont \coloneqq 2^{\aleph_0}$ denotes the cardinality of the continuum; by
Cantor's theorem $\aleph_0 < \Cont$, and since $\aleph_1$ is the least
uncountable cardinal, $\aleph_1 \le \Cont$ in ZFC.
We write $\Fsigma$ (resp.\ $\Gdelta$) for the usual classes of
countable unions of closed sets (resp.\ countable intersections of open
sets), relative to whichever ambient space is under discussion. For $S
\subseteq [0,1]$ we write $S^\complement \coloneqq [0,1] \setminus S$;
complements are always taken relative to $[0,1]$. A topological space is
\emph{completely metrizable} if it is homeomorphic to a complete metric
space; this is weaker than \emph{Polish}, which additionally requires
separability.

\subsection{Background results}
\label{subsec:background}

The proof uses two classical facts about complete metrizability:
Fact~\ref{thm:background-gdelta} is applied to the ambient space
$[0,1]$, and Fact~\ref{thm:background-closed} to $\Met(D)$.

\begin{fact}[Alexandrov--Lavrentiev]
  \label{thm:background-gdelta}
  Let $X$ be a metrizable space and $S \subseteq X$. If $S$ is
  completely metrizable in the subspace topology, then $S$ is a $\Gdelta$
  subset of $X$.
\end{fact}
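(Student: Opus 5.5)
The plan is to prove the classical Alexandrov--Lavrentiev theorem directly. We fix a compatible metric $e$ on $X$ and a complete metric $\sigma$ on $S$ compatible with the subspace topology. We then exhibit $S$ as the intersection of a $\Gdelta$ subset $G$ of $\overline{S}$ with the closed set $\overline{S}$. In a metrizable space every closed set is $\Gdelta$, since $\overline{S}=\bigcap_n \{x : e(x,\overline{S})<1/n\}$. So it suffices to find open sets $U_n\subseteq X$ with $S=\overline{S}\cap\bigcap_n U_n$.

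For each $n$ we define $U_n$ as the set of points $x\in X$ that have an open neighbourhood $V$ in $X$ with $\operatorname{diam}_e V<1/n$ and $\operatorname{diam}_\sigma(V\cap S)<1/n$, where we require $V\cap S\neq\emptyset$ or else $x\notin\overline{S}$. Each $U_n$ is open by construction. Next we check $S\subseteq U_n$. Given $s\in S$, the $\sigma$-ball $B_\sigma(s,1/(3n))$ is open in $S$, so it equals $W\cap S$ for some open $W\subseteq X$. We then take $V=W\cap B_e(s,1/(3n))$.

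The substantive step is the reverse inclusion $\overline{S}\cap\bigcap_n U_n\subseteq S$. Take $x$ in this intersection and choose neighbourhoods $V_n\ni x$ witnessing $x\in U_n$. Replacing $V_n$ by $V_1\cap\dots\cap V_n$ keeps both diameter bounds and makes the sequence decreasing. Since $x\in\overline{S}$, each $V_n\cap S$ is nonempty, so we may pick $s_n\in V_n\cap S$. For $m\ge n$ we have $s_m\in V_n$, hence $\sigma(s_m,s_n)<1/n$, and the sequence is $\sigma$-Cauchy. By completeness it converges in $\sigma$ to some $s\in S$, and therefore also in the topology of $X$, because $\sigma$ induces the subspace topology. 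On the other hand, $e(s_n,x)<1/n$, so $s_n\to x$ in $X$. Limits are unique in the Hausdorff space $X$, so $x=s\in S$.

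The main point of care is this final step. The Cauchy property must come from the $\sigma$-diameter condition, while the identification of the limit uses the $e$-diameter together with the compatibility of $\sigma$ with the subspace topology. Once that is in place, $S=\overline{S}\cap\bigcap_n U_n$ is a countable intersection of open sets, which completes the proof.
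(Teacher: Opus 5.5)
Your proof is correct: the $U_n$ are open, your choice $V=W\cap B_e(s,1/(3n))$ gives $S\subseteq U_n$, and the sequence argument gives $\overline{S}\cap\bigcap_n U_n\subseteq S$. There the Cauchy property comes from the $\sigma$-diameters, and the limit is identified through the $e$-diameters and the Hausdorff property.

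The paper does not prove this fact; it only cites it (Kechris, Theorem~3.11, together with Engelking and Willard). The standard proof in those sources goes through Kuratowski's extension theorem. The identity of $S$ extends to a continuous $g\colon G\to S$ on a $G_\delta$ set $G$ with $S\subseteq G\subseteq\overline{S}$, namely the points of $\overline{S}$ where the oscillation of the identity (measured in $\sigma$) vanishes. Since $g$ and the inclusion $G\hookrightarrow X$ agree on the dense set $S$, they coincide, so $G=g[G]\subseteq S$.

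Your $U_n\cap\overline{S}$ is precisely the set of points of $\overline{S}$ where that oscillation is below $1/n$. Your $e$-diameter condition takes the place of the ``agree on a dense subset'' step by forcing $s_n\to x$ in $X$ directly. So you have essentially inlined the cited argument rather than found a different one.

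The citation route gives the more general extension theorem, which also yields Lavrentiev's theorem on extending homeomorphisms. Your version gives a short, self-contained proof of exactly the special case the paper needs.
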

This is the forward direction of the classical $\Gdelta$-characterisation
of complete metrizability, stated in this generality in
\cite[Theorem~3.11]{Kechris1995}; see also
\cite[Theorems~4.3.23--4.3.24]{Engelking1989} and
\cite[Theorem~24.12]{Willard2004}.

\begin{fact}
  \label{thm:background-closed}
  A closed subspace of a completely metrizable space is completely metrizable.
\end{fact}
% \begin{fact}
%   Let $X$ be a completely metrizable space and $Y \subseteq X$ a closed
%   subset. Then $Y$, in the subspace topology, is completely metrizable.
% \end{fact}
% \begin{proof}
%   Fix a complete metric $d$ on $X$ inducing its topology. The restriction
%   $d|_{Y\times Y}$ induces the subspace topology on $Y$, so it suffices to
%   show that $d|_{Y\times Y}$ is complete. Let $(x_n)_n$ be a Cauchy
%   sequence in $(Y, d|_Y)$. It is also a Cauchy sequence in $(X,d)$, so by
%   completeness of $d$ it converges to some $x \in X$. In any metric space,
%   a closed set is closed under limits of sequences of its own points, so
%   $x \in Y$. Hence $(x_n)_n$ converges to $x$ within $(Y,d|_Y)$, which is
%   therefore complete, and $Y$ is completely metrizable.
% \end{proof}
Fact~\ref{thm:background-closed} is the only fact about complete
metrizability that we apply to $\Met(D)$.

\begin{remark}[The supremum distance is an extended metric]
  \label{rem:extended-rho}
  For unbounded metrics, $\rho(d,d')$ may be infinite. This causes no
  difficulty: $\min(\rho,1)$ induces the same topology as $\rho$ and is an
  ordinary metric. Thus $\Met(D)$ is metrizable, and complete metrizability
  depends only on its topology. In any case, every metric constructed in
  Section~\ref{sec:main} takes values in $[0,2]$, so all instances of
  $\rho$ appearing there are finite.
\end{remark}

\section{Main Construction}
\label{sec:main}

\subsection{A non-$\Fsigma$ set of cardinality $\aleph_1$}
\label{subsec:A}

\begin{lemma}
  \label{lem:closed-card}
  If $F \subseteq [0,1]$ is closed and $\lvert F\rvert < \Cont$, then $F$
  is countable.
\end{lemma}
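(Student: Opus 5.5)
We argue via the contrapositive: if a closed set $F \subseteq [0,1]$ is uncountable, then $\lvert F\rvert = \Cont$. This is the perfect set property for closed subsets of $[0,1]$, and we prove it with the Cantor--Bendixson argument followed by a Cantor scheme.

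\emph{Step 1 (splitting off the condensation points).} Let $P$ be the set of condensation points of $F$, i.e.\ those $x\in F$ every neighbourhood of which meets $F$ in uncountably many points. Fix a countable base $\{B_n\}$ of $[0,1]$, consisting of rational open intervals. A point of $F\setminus P$ lies in some $B_n$ with $B_n\cap F$ countable, so $F\setminus P \subseteq \bigcup\{B_n\cap F : B_n\cap F \text{ countable}\}$ is countable. Since $F$ is uncountable, $P$ is nonempty, indeed uncountable.

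\emph{Step 2 (properties of $P$).} $P$ is closed, because a limit of condensation points is a condensation point and $F$ is closed. $P$ also has no isolated points. Indeed, if $x\in P$ and $U$ is an open neighbourhood of $x$, then $U\cap F$ is uncountable while $(U\cap F)\setminus P$ is countable. Hence $U\cap P$ is uncountable, so it contains a point other than $x$. Moreover, every open set meeting $P$ meets $P$ in uncountably many points.

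\emph{Step 3 (Cantor scheme).} We build closed intervals $I_s$ for $s\in 2^{<\omega}$ such that:
\begin{itemize}
  \item the interior of each $I_s$ meets $P$;
  \item $I_{s0}, I_{s1} \subseteq I_s$ are disjoint;
  \item $\operatorname{diam} I_s \le 2^{-\lvert s\rvert}$.
\end{itemize}
The construction is possible because the interior of $I_s$ contains two distinct points of $P$, and around each of them we can choose a small closed interval inside $I_s$, the two intervals being disjoint. For each branch $\sigma\in 2^\omega$, the nested intersection $\bigcap_n I_{\sigma\restriction n}$ is a single point. That point is a limit of points of $P$, so it lies in $P$ because $P$ is closed. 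Distinct branches eventually pass through disjoint intervals, so they give distinct points. This yields an injection $2^\omega\to P\subseteq F$, hence $\lvert F\rvert \ge \Cont$. The reverse inequality $\lvert F\rvert\le\Cont$ is trivial.

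The only delicate point is Step 1, namely that $F\setminus P$ is countable. This is where the countable base of $[0,1]$ is used, and the rest of the argument is routine. In fact we never need the full Cantor--Bendixson decomposition into a perfect set and a countable set; it suffices that $P$ is a nonempty closed set without isolated points, and Steps 1 and 2 supply exactly that.
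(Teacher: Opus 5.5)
Your proof is correct and takes essentially the same route as the paper. The paper splits $F$ by Cantor--Bendixson into a nonempty perfect set plus a countable set and cites that a nonempty perfect Polish space has cardinality $\Cont$; you prove both cited facts directly. Your condensation-point set gives exactly that decomposition $F = P \sqcup (F\setminus P)$, so you are in fact using the full Cantor--Bendixson decomposition despite your closing remark, and your Cantor scheme is the standard proof of the cardinality fact.
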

\begin{proof}
  We prove the contrapositive. Let $F$ be closed and uncountable. Being
  closed in $[0,1]$, $F$ is itself Polish, and by the Cantor--Bendixson
  theorem \cite[Theorem~6.4]{Kechris1995} it splits as a disjoint union
  $F = P \sqcup C$ of a perfect set $P$ and a countable set $C$. Since $F$
  is uncountable and $C$ is countable, $P \ne \emptyset$, and a nonempty
  perfect Polish space has cardinality $\Cont$
  \cite[Theorem~6.2, Corollary~6.3]{Kechris1995}; hence
  $\lvert F\rvert \ge \lvert P\rvert = \Cont$.
\end{proof}

\begin{lemma}
  \label{lem:A-exists}
  There exists $A \subseteq [0,1]$ with $\lvert A\rvert = \aleph_1$
  such that $A$ is not an $\Fsigma$ set.
\end{lemma}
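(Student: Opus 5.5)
We split into two cases according to whether $\aleph_1 < \Cont$ or $\aleph_1 = \Cont$. In both cases we will construct $A$ with $\lvert A\rvert = \aleph_1$ such that every closed subset of $[0,1]$ contained in $A$ is countable. Such an $A$ cannot be $\Fsigma$. Indeed, if $A = \bigcup_n F_n$ with each $F_n$ closed, then each $F_n \subseteq A$ would be countable, so $A$ would be countable, contradicting $\lvert A\rvert = \aleph_1$.

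\emph{Case $\aleph_1 < \Cont$.} Take any $A \subseteq [0,1]$ with $\lvert A\rvert = \aleph_1$; this is possible since $\aleph_1 \le \Cont = \lvert [0,1]\rvert$. Any closed $F \subseteq A$ has $\lvert F\rvert \le \aleph_1 < \Cont$, so $F$ is countable by Lemma~\ref{lem:closed-card}. By the remark above, $A$ is not $\Fsigma$.

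\emph{Case $\aleph_1 = \Cont$.} Here an arbitrary set of size $\aleph_1$ will not do, for instance $A = [0,1]$ itself is closed. Instead we build a Bernstein-type set by transfinite recursion. There are exactly $\Cont$ closed subsets of $[0,1]$, since each is determined by the countable family of rational open intervals disjoint from it. Let $(P_\alpha)_{\alpha<\omega_1}$ enumerate the uncountable closed subsets; there are $\Cont = \aleph_1$ of them, and at least one, namely $[0,1]$. Each $P_\alpha$ has cardinality $\Cont$, by the Cantor--Bendixson argument in the proof of Lemma~\ref{lem:closed-card}.

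At stage $\alpha$ the points chosen so far, $\{a_\beta, b_\beta : \beta<\alpha\}$, form a countable set. We therefore choose distinct $a_\alpha, b_\alpha \in P_\alpha$ outside this set, which is possible since $\lvert P_\alpha\rvert = \Cont > \aleph_0$. Set $A = \{a_\alpha : \alpha<\omega_1\}$. The $a_\alpha$ are pairwise distinct, so $\lvert A\rvert = \aleph_1$. Each $b_\alpha$ lies outside $A$, because it differs from every $a_\beta$ with $\beta \le \alpha$ by choice at stage $\alpha$, and from every $a_\beta$ with $\beta > \alpha$ by choice at stage $\beta$. Hence every uncountable closed $P_\alpha$ meets $A^\complement$, so no uncountable closed set is contained in $A$. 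By the remark above, $A$ is not $\Fsigma$.

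The main obstacle is the second case. There one must verify the count of closed sets, which uses $\lvert [0,1]\rvert = \Cont$ and the countable base. One must also check that the recursion never runs out of points, which uses $\lvert P_\alpha\rvert = \Cont$ together with the fact that fewer than $\aleph_1$ points have been used at each stage. Choice is used to make the selections, which is fine since we work in ZFC.
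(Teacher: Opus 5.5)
Your proof is correct. Your case $\aleph_1<\Cont$ is exactly the paper's, but your case $\aleph_1=\Cont$ takes a genuinely different route. There the paper simply takes $A=[0,1]\setminus\mathbf{Q}$, which has size $\Cont=\aleph_1$, and uses the Baire category theorem. Each closed $F_n\subseteq A$ misses the dense set $\mathbf{Q}\cap[0,1]$, so it has empty interior and is nowhere dense. Then $[0,1]=\bigcup_n F_n\cup(\mathbf{Q}\cap[0,1])$ would be meagre, a contradiction.

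The irrationals do not satisfy your unifying criterion, since they contain uncountable closed sets such as suitable Cantor sets of irrationals. So the paper's two cases rest on different mechanisms, whereas both of yours rest on the single property ``every closed subset of $A$ is countable''.

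The paper's route is shorter and needs no transfinite recursion and no count of closed sets. Yours is essentially the Bernstein-set construction from the paper's subsequent remark, specialised to a recursion of length $\omega_1=\Cont$. It yields the stronger conclusion that $A$ contains no uncountable closed set at all.

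One small simplification: under $\mathsf{CH}$ you do not need Cantor--Bendixson to keep the recursion going. $P_\alpha$ is uncountable and only countably many points have been used, so infinitely many candidates remain. The full strength $\lvert P_\alpha\rvert=\Cont$ is needed only if you run the recursion for $\Cont$ steps without $\mathsf{CH}$. That version would remove the case split entirely, as the paper's remark does.
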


\begin{proof}
  We split on whether the continuum hypothesis $\mathsf{CH}$
  ($\aleph_1 = \Cont$) holds.

  \medskip
  \noindent\textbf{Case 1: $\aleph_1=\Cont$.}
  Let $A=[0,1]\setminus\mathbf{Q}$. Since $[0,1]$ has cardinality $\Cont$
  and $\mathbf{Q}\cap[0,1]$ is countable, $|A|=\Cont=\aleph_1$.

  Suppose for contradiction that $A=\bigcup_n F_n$, where each $F_n$ is
  closed in $[0,1]$. Since $\mathbf{Q}\cap[0,1]$ is dense and $F_n\subseteq A$,
  each $F_n$ has empty interior and is therefore nowhere dense. The set
  $\mathbf{Q}\cap[0,1]$ is also meagre. Hence $[0,1]$ would be meagre in
  itself, contradicting the Baire category theorem.

  \medskip
  \noindent\textbf{Case 2: $\aleph_1<\Cont$.}
  Choose $A\subseteq[0,1]$ with $|A|=\aleph_1$. Suppose for contradiction
  that $A=\bigcup_n F_n$, where each $F_n$ is closed in $[0,1]$. Since
  $|F_n|\le|A|=\aleph_1<\Cont$, Lemma~\ref{lem:closed-card} implies that
  every $F_n$ is countable. Hence $A$ is countable, contradicting $|A|=\aleph_1$.

  In either case, $A$ is not $\Fsigma$.
\end{proof}

\begin{remark}
  To avoid the case split on $\mathsf{CH}$, one may instead use a
  Bernstein set: a set $B \subseteq [0,1]$ such that $B$ and
  $B^\complement$ both meet every uncountable closed subset of $[0,1]$
  exists by transfinite recursion and has cardinality $\Cont$
  \cite[p.~24]{Oxtoby1980}. Take $A \subseteq B$ with $\lvert A\rvert =
  \aleph_1$. If $A = \bigcup_n F_n$ with each $F_n$ closed, then some
  $F_n$ is an uncountable closed set contained in $B$, contradicting
  that $B^\complement$ meets every uncountable closed set.
\end{remark}

\subsection{A parametrised family of metrics}
\label{subsec:star}

Fix, for the remainder of the paper, the set $A$ from
Lemma~\ref{lem:A-exists} together with an injective enumeration $A =
\{a_\alpha : \alpha < \omega_1\}$ (possible since $\lvert A\rvert =
\aleph_1$).

\begin{lemma}
  \label{lem:star-metric}
  There is a discrete space $D$ with $\lvert D\rvert = \aleph_1$ and a
  family $(d_t)_{t \in [0,1]}$ of metrics on $D$ such that $d_t \in
  \Met(D)$ holds if and only if $t \in A^\complement$.
\end{lemma}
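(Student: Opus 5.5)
We will construct $D$ and $(d_t)$ directly from the enumeration $A=\{a_\alpha:\alpha<\omega_1\}$. The idea is to produce metrics on a discrete set that are compatible with the discrete topology exactly when some point is \emph{not} an accumulation point, i.e.\ when $\inf_{y\ne x} d_t(x,y)>0$ for every $x$. Let
\[
  D=\{\ast\}\cup\omega_1 .
\]
For $t\in[0,1]$ we declare $d_t(\ast,\alpha)=\lvert t-a_\alpha\rvert+\varepsilon_\alpha$ for a suitable correction $\varepsilon_\alpha$, and $d_t(\alpha,\beta)$ is defined so that the points $\alpha\in\omega_1$ stay uniformly separated from one another. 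The simplest attempt is $\varepsilon_\alpha=0$ and $d_t(\alpha,\beta)=1$ for $\alpha\ne\beta$. This fails to be a metric when $t=a_\alpha$, since then $d_t(\ast,\alpha)=0$, and it is also not compatible when $t$ is merely a limit of the $a_\alpha$. Neither behaviour matches ``$t\in A$'', so we need a refinement.

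The refinement is to give $\ast$ distance $1$ to everything except in the degenerate case:
\[
  d_t(\ast,\alpha)=1+\lvert t-a_\alpha\rvert\cdot 0 ,
\]
which carries no information about $t$, so that is not it either. Instead we use two copies of each $\alpha$. Let
\[
  D=\omega_1\times\{0,1\},\qquad d_t((\alpha,0),(\alpha,1))=\lvert t-a_\alpha\rvert ,
\]
and set all other distances between distinct points equal to $1$. This gives the following.
\begin{itemize}
\item[(i)] For each $t$, $d_t$ is a pseudometric with values in $[0,2]$. The triangle inequality is checked case by case: every cross-pair distance is $1$, and every intra-pair distance lies in $[0,1]$, so any triangle has at most one side below $1$ and the inequality holds.
\item[(ii)] $d_t$ is a genuine metric iff $t\ne a_\alpha$ for all $\alpha$, i.e.\ iff $t\in A^\complement$.
\item[(iii)] Whenever $d_t$ is a metric, it induces the discrete topology. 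Each point $(\alpha,i)$ has a fixed positive distance $\lvert t-a_\alpha\rvert$ to its twin and distance $1$ to every other point, so its ball of radius $\min(1,\lvert t-a_\alpha\rvert)$ is a singleton.
\end{itemize}
Together these show $d_t\in\Met(D)$ iff $t\in A^\complement$, reading ``$d_t$ is a metric on $D$'' in the statement as ``pseudometric'' when $t\in A$. We also have $\lvert D\rvert=\aleph_1$ because $\lvert\omega_1\times 2\rvert=\aleph_1$.

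The main obstacle is not this lemma, which is easy, but anticipating Lemma~\ref{lem:closed-image}: the image $\{d_t:t\in A^\complement\}$ must be closed in $\Met(D)$. With the definition above, $\rho(d_t,d_s)=\sup_\alpha\bigl\lvert\lvert t-a_\alpha\rvert-\lvert s-a_\alpha\rvert\bigr\rvert\le\lvert t-s\rvert$. For the reverse bound, and hence bi-Lipschitz, we would add a reference pair $(\ast_0,\ast_1)$ with $d_t(\ast_0,\ast_1)=1+t$, or use the pairs with $a_\alpha$ near $0$ and $1$. Closedness then follows: if $d_{t_n}\to d$ in $\Met(D)$, the $t_n$ are Cauchy, so $t_n\to t$ and $d=d_t$. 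Since $d$ is a compatible metric, $t\in A^\complement$.

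If the statement's ``metrics'' must be literal metrics even for $t\in A$, we would instead use $\lvert t-a_\alpha\rvert$ on the pair's own distance while keeping distinct points at distance at least something positive. But then incompatibility for $t\in A$ fails, so we would keep the pseudometric reading. Alternatively, we would make infinitely many pairs accumulate: attach to each $\alpha$ a sequence $(\alpha,n)$ with $d_t((\alpha,0),(\alpha,n))=\lvert t-a_\alpha\rvert+1/n$. This is a genuine metric for all $t$, non-discrete exactly when $t=a_\alpha$ because $(\alpha,n)\to(\alpha,0)$, and the same verification goes through.
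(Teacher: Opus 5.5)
\textbf{Gap: the main construction does not give metrics for $t\in A$.} Your two-point construction does not prove the lemma as stated. The statement requires every $d_t$, $t\in[0,1]$, to be a metric on $D$. For $t=a_\alpha$, your $d_t$ puts the distinct points $(\alpha,0),(\alpha,1)$ at distance $0$, so $d_t\notin\Met(D)$ only because it is not a metric at all. Reading ``metric'' as ``pseudometric'' changes the lemma. In the paper's version, for $t\in A$ the function $d_t$ is an honest metric, and it fails to lie in $\Met(D)$ only for a topological reason: a non-isolated point. Your pseudometric variant, with the reference pair $d_t(p,q)=1+t$ added, would still suffice for Theorem~\ref{thm:conj51}. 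In the closedness argument, a limit $d'\in\Met(D)$ with $d'=d_t$ forces $d_t$ to separate points, hence $t\notin A$. So it is a legitimate simplification of the overall argument, but it proves a different, weaker lemma.

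\textbf{Gap: the fallback is incomplete.} Your fallback attaches a sequence $(\alpha,n)$ to each $(\alpha,0)$ with legs $\lvert t-a_\alpha\rvert+1/n$. This is the right idea and is essentially the paper's construction. However, you leave $d_t((\alpha,n),(\alpha,m))$ unspecified and assert that ``the same verification goes through,'' which is false. Your triangle-inequality argument relied on each cluster having only two points, so that every triangle has at most one side shorter than $1$. With infinitely many points per cluster, that argument no longer applies, and naive choices fail:
\begin{itemize}
\item If you set $d_t((\alpha,n),(\alpha,m))=1$, then at $t=a_\alpha$ you get $1>1/n+1/m$ for large distinct $n,m$. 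This violates the triangle inequality through $(\alpha,0)$.
\item If you use the star distance $2\lvert t-a_\alpha\rvert+1/n+1/m$, it can exceed $2$; for instance $n=1$, $m=2$, $\lvert t-a_\alpha\rvert=1/2$ gives $5/2$. This violates the triangle inequality through any point of another cluster, which lies at distance $1$ from both.
\end{itemize}
The paper handles this by capping the legs, $s_n(\alpha,t)=\min\bigl(1,\lvert t-a_\alpha\rvert+\tfrac{1}{n+1}\bigr)$. It then uses the star distance $s_n+s_m\le 2$ inside each $C_\alpha$ and distance $1$ across parts. With these definitions, the triangle inequality follows by cases on the partition. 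Each $x_{\alpha,n}$ is isolated because $\delta(x_{\alpha,n})=s_n\ge 1/(n+1)$. For $y_\alpha$ one gets $\delta(y_\alpha)=\min(1,\lvert t-a_\alpha\rvert)$, which vanishes exactly when $t=a_\alpha$.
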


A bijection $f \colon D\to D'$ induces an isometric isomorphism between
$\Met(D)$ and $\Met(D')$ by relabelling the two arguments of each metric.
Since the isometry type of $\Met(D)$ thus depends only on $|D|$, it
suffices to consider the space $D$ constructed below, here and in the
proof of Theorem~\ref{thm:conj51}.

\begin{proof}
  \noindent\textbf{Construction.} Let
  \[
    D \coloneqq \{p,q\} \sqcup \{y_\alpha : \alpha<\omega_1\} \sqcup
    \{x_{\alpha,n} : \alpha < \omega_1,\ n \in \mathbf{N}\},
  \]
  a disjoint union with $\lvert D\rvert = 2 + \aleph_1 + \aleph_1\cdot
  \aleph_0 = \aleph_1$, carrying the discrete topology. For $\alpha <
  \omega_1$ let $C_\alpha \coloneqq \{y_\alpha\} \cup \{x_{\alpha,n} :
  n\in\mathbf{N}\}$. Then
  \[
    D = \{p,q\} \sqcup \bigsqcup_{\alpha<\omega_1} C_\alpha
  \]
  is a partition of $D$. For $t \in [0,1]$, $\alpha < \omega_1$,
  $n \in \mathbf{N}$, set
  \[
    s_n(\alpha,t) \coloneqq \min\left(1,\ \lvert t-a_\alpha\rvert +
    \dfrac{1}{n+1}\right)
    \in (0,1].
  \]
  Define $d_t \colon D\times D\to[0,2]$ by
  \[
    d_t(u,v)=
    \begin{cases}
      0, & u=v,\\
      1+t, & \{u,v\}=\{p,q\},\\
      s_n(\alpha,t), & \{u,v\}=\{y_\alpha,x_{\alpha,n}\},\\
      s_n(\alpha,t)+s_m(\alpha,t),
      & \{u,v\}=\{x_{\alpha,n},x_{\alpha,m}\},\ n\ne m,\\
      1, & \text{otherwise}.
    \end{cases}
  \]
  The range of $d_t$ lies in $[0,2]$ since $0 < s_n \le 1$ and $0\le t\le 1$.

  \medskip\noindent\textbf{Claim (C1).} $d_t$ is a metric, for every $t \in [0,1]$.
  Definiteness and symmetry are immediate. For the triangle inequality
  $d_t(u,w)\le d_t(u,v)+d_t(v,w)$, the case where two of $u,v,w$ coincide is
  trivial, so assume $u,v,w$ are pairwise distinct, and abbreviate $s_n \coloneqq
  s_n(\alpha,t)$ once $\alpha$ is fixed. We use two facts: every distance is
  at most $2$, and two points in distinct parts of the partition are at distance
  exactly $1$. Split on how $\{u,v,w\}$ meets the partition.

  \begin{itemize}
  \item \emph{All three in one part.} Since $\{p,q\}$ has only two
    elements, the part is some $C_\alpha$. If the three points are
    $y_\alpha$, $x_{\alpha,n}$, $x_{\alpha,m}$ ($n \ne m$), the required
    inequalities are $s_n+s_m \le s_n + s_m$ (through $y_\alpha$) and
    $s_n \le (s_n+s_m) + s_m$ (through $x_{\alpha,m}$). If the three points
    are $x_{\alpha,n},x_{\alpha,m},x_{\alpha,k}$ (distinct indices), then
    $s_n+s_m \le (s_n+s_k) + (s_k+s_m)$.
  \item \emph{Exactly two in one part $E$, one point $z$ outside $E$.} Then
    $d_t(u,z)=d_t(v,z)=1$ for the two points $u,v \in E$, so
    $d_t(u,v)\le 2 = d_t(u,z)+d_t(z,v)$ and $d_t(u,z)=1\le d_t(u,v)+1 =
    d_t(u,v)+d_t(v,z)$.
  \item \emph{Three distinct parts.} All three pairwise distances equal
    $1$, and $1\le 1+1$.
  \end{itemize}
  These cases exhaust all possible configurations of three points with respect
  to the partition. Hence $d_t$ satisfies the triangle inequality and is a metric.

  \medskip\noindent\textbf{Claim (C2).} $t \in A^\complement$ implies $d_t \in \Met(D)$.
  A metric induces the discrete topology on $D$ exactly when every point is
  isolated, i.e. $\delta(u) \coloneqq \inf_{v\ne u} d_t(u,v) > 0$ for every $u \in
  D$. Fix $t \in A^\complement$.
  \begin{itemize}
  \item $\delta(p)=\delta(q)=1$: the distance from $p$ (resp.\ $q$) to any
    point other than $q$ (resp.\ $p$) is $1$, and $d_t(p,q)=1+t\ge 1$.
  \item For $x_{\alpha,n}$: its distances are $s_n(\alpha,t)$
    (to $y_\alpha$), $s_n(\alpha,t)+s_m(\alpha,t) \ge s_n(\alpha,t)$ (to
    $x_{\alpha,m}$, $m \ne n$), and $1$ (outside $C_\alpha$); since $s_n
    \le 1$, $\delta(x_{\alpha,n}) = s_n(\alpha,t) \ge 1/(n+1)>0$.
  \item For $y_\alpha$: its distances are $s_n(\alpha,t)$
    ($n\in\mathbf{N}$), and $1$ outside $C_\alpha$.
    Since $r\mapsto\min(1,r)$ is continuous and nondecreasing and
    $\lvert t-a_\alpha\rvert+1/(n+1) \downarrow \lvert t-a_\alpha\rvert$ as
    $n\to\infty$, $\inf_n s_n(\alpha,t) = \min(1,\lvert t-a_\alpha\rvert)$,
    so $\delta(y_\alpha)=\min(1,\lvert t-a_\alpha\rvert)$. As $t \in
    A^\complement$ and $a_\alpha \in A$, $t \ne a_\alpha$, so
    $\delta(y_\alpha)>0$.
  \end{itemize}
  Every point is isolated, so $d_t$ induces the discrete topology; with
  (C1), $d_t \in \Met(D)$.

  \medskip\noindent\textbf{Claim (C3).} $t \in A$ implies $d_t \notin \Met(D)$.
  Let $t=a_\alpha\in A$. By (C1), $d_t$ is a metric on $D$. Since
  \[
    d_t(y_\alpha,x_{\alpha,n})=\frac{1}{n+1}\to 0
    \qquad \text{(as $n\to\infty$)},
  \]
  the point $y_\alpha$ is not isolated. Hence $d_t\notin\Met(D)$.

  Claims (C2) and (C3) prove Lemma~\ref{lem:star-metric}.
\end{proof}

% \begin{remark}
%   \label{rem:star-name}
%   By the equality case in (C1), the distance between two points
%   $x_{\alpha,n}$ and $x_{\alpha,m}$ of a common $C_\alpha$ is realised by
%   the path through $y_\alpha$, so each $(C_\alpha, d_t)$ is isometric to
%   a metric star. This geometric feature is not used in the sequel.
% \end{remark}

\section{Proof of the Main Theorem}
\label{sec:closed-image}

We continue to use the $A$, $D$, and $(d_t)_{t\in[0,1]}$ fixed in
Section~\ref{subsec:star}. Give $A^\complement$ the subspace topology
from $[0,1]$.

\begin{lemma}
  \label{lem:closed-image}
  The map $\varphi \colon A^\complement \to \Met(D)$, $\varphi(t) \coloneqq d_t$, is a
  bi-Lipschitz embedding, and its image $\varphi[A^\complement]$ is
  closed in $\Met(D)$.
\end{lemma}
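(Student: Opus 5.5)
The plan is to prove the bi-Lipschitz bounds and the closedness of the image separately, and to get both from the coordinate $d_t(p,q)=1+t$. \textbf{Lower bound.} For $t,t'\in[0,1]$,
\[
  \rho(d_t,d_{t'}) \ge \lvert d_t(p,q)-d_{t'}(p,q)\rvert = \lvert t-t'\rvert .
\]
\textbf{Upper bound.} Next I bound every coordinate difference by $2\lvert t-t'\rvert$. All constant entries (the $0$'s and $1$'s) contribute nothing. The map $t\mapsto \lvert t-a_\alpha\rvert+1/(n+1)$ is $1$-Lipschitz, and $r\mapsto\min(1,r)$ is $1$-Lipschitz, so each $s_n(\alpha,\cdot)$ is $1$-Lipschitz. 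Hence the entries $s_n+s_m$ are $2$-Lipschitz. Together these give $\lvert t-t'\rvert\le\rho(d_t,d_{t'})\le 2\lvert t-t'\rvert$ on all of $[0,1]$. In particular $\varphi$ is injective, and it is a homeomorphism onto its image, since $\rho$ induces the topology of $\Met(D)$ (Remark~\ref{rem:extended-rho}).

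\textbf{Closedness.} I will show that every limit point in $\Met(D)$ of the image already lies in the image. Let $t_k\in A^\complement$ with $d_{t_k}\to d$ in $\rho$, where $d\in\Met(D)$. By the lower bound, $(t_k)$ is Cauchy in $[0,1]$, so $t_k\to t$ for some $t\in[0,1]$ by compactness. By the upper bound, $d_{t_k}\to d_t$ uniformly, so $d=d_t$ by uniqueness of limits (both are uniform limits of the same sequence of functions on $D\times D$). Since $d\in\Met(D)$, Lemma~\ref{lem:star-metric} gives $t\in A^\complement$, so $d=\varphi(t)\in\varphi[A^\complement]$. Because $\Met(D)$ is metrizable, sequential closedness suffices.

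\textbf{Main obstacle.} There is no hard step. The key point is that the extended map $t\mapsto d_t$ is defined and bi-Lipschitz on all of $[0,1]$, and Lemma~\ref{lem:star-metric} then filters out the limits $t\in A$. The only care needed is to make sure that $d_t$ makes sense as an element of the larger space of all metrics with the sup distance, so that the convergence $d_{t_k}\to d_t$ is meaningful even when $d_t\notin\Met(D)$. This is fine because uniform convergence of functions is independent of membership in $\Met(D)$.
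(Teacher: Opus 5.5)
Your proposal is correct and follows essentially the same route as the paper: the same $\{p,q\}$-coordinate lower bound, the same $1$-Lipschitz argument for $s_n(\alpha,\cdot)$ giving the factor $2$, and the same closedness argument that extends $t\mapsto d_t$ to all of $[0,1]$ and uses Lemma~\ref{lem:star-metric} to rule out $t\in A$. The only difference is cosmetic: the paper reads off the limit parameter directly as $t=d'(p,q)-1$, while you get $t_k\to t$ from the lower bound and completeness of $[0,1]$.
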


\begin{proof}
  \noindent\textbf{Claim (C4) (Bi-Lipschitz estimate).} For all $s,t\in[0,1]$,
  \[
    \lvert t-s\rvert \le \rho(d_t,d_s) \le  2\lvert t-s\rvert .
  \]
  \emph{Lower bound.} Comparing values on the pair $\{p,q\}$,
  $\lvert d_t(p,q)-d_s(p,q)\rvert = \lvert(1+t)-(1+s)\rvert = \lvert
  t-s\rvert$, and $\rho(d_t,d_s)$ is at least this one value.

  \emph{Upper bound.} It suffices to bound $\lvert d_t(u,v)-d_s(u,v)\rvert
  \le 2\lvert t-s\rvert$ for every pair $\{u,v\}$ and take the supremum. On
  $\{p,q\}$ the difference is $\lvert t-s\rvert$. The map $r
  \mapsto \min(1,r)$ is $1$-Lipschitz, and $r \mapsto \lvert r-a_\alpha\rvert
  + 1/(n+1)$ is $1$-Lipschitz by the reverse triangle inequality, so their
  composite $r \mapsto s_n(\alpha,r)$ is $1$-Lipschitz: $\lvert
  s_n(\alpha,t)-s_n(\alpha,s)\rvert \le \lvert t-s\rvert$. Hence for a
  pair $\{y_\alpha, x_{\alpha,n}\}$ the difference is at most $\lvert
  t-s\rvert \le 2\lvert t-s\rvert$, and for a pair
  $\{x_{\alpha,n},x_{\alpha,m}\}$ it is at most $2\lvert t-s\rvert$ by the
  triangle inequality on two such terms. For pairs meeting two distinct
  parts of the partition the value is constantly $1$, so the difference is
  $0$. This proves the upper bound, hence (C4).

  Since $\varphi$ is injective by the lower bound and both Lipschitz
  estimates hold, $\varphi$ is a bi-Lipschitz bijection from
  $A^\complement$ onto $\varphi[A^\complement]$ (with the subspace topology
  from $\Met(D)$), in particular a homeomorphism onto its image.

  \medskip\noindent\textbf{Claim (C5) (The image is closed).} The topology of $\Met(D)$ is given
  by the finite metric $\min(\rho,1)$, so closure points are sequential
  limits. Let $d' \in \Met(D)$ be a limit point of
  $\varphi[A^\complement]$, and choose $t_k \in A^\complement$ with
  $\rho(d_{t_k},d')\to 0$. We show $d' \in \varphi[A^\complement]$.

  \emph{Recovering the parameter.} $\lvert d_{t_k}(p,q)-d'(p,q)\rvert \le
  \rho(d_{t_k},d')\to 0$ gives $1+t_k \to d'(p,q)$, i.e.\ $t_k \to t$ for $t \coloneqq
  d'(p,q)-1$. Each $t_k \in [0,1]$ and $[0,1]$ is closed in $\mathbf{R}$, so
  $t \in [0,1]$, and by (C1) $d_t$ is a well-defined metric on $D$.

  \emph{Identifying the limit.} Fix $u,v \in D$. By (C4),
  $\rho(d_{t_k},d_t) \le 2\lvert t_k-t\rvert \to 0$, so
  \[
    \lvert d'(u,v)-d_t(u,v)\rvert \le \rho(d',d_{t_k}) +
    \rho(d_{t_k},d_t) \longrightarrow 0 \quad (k \to \infty),
  \]
  while the left side does not depend on $k$; hence $d'(u,v)=d_t(u,v)$ for
  all $u,v$, i.e.\ $d' = d_t$.

  \emph{Excluding the boundary case.}
  Since $d'=d_t$ and $d'\in\Met(D)$, we have $d_t\in\Met(D)$.
  If $t\in A$, however, (C3) gives $d_t\notin\Met(D)$, a contradiction.
  Hence $t \notin A$, i.e.\ $t \in A^\complement$, and $d' = d_t
  = \varphi(t) \in \varphi[A^\complement]$.

  Thus $\varphi[A^\complement]$ is closed in $\Met(D)$, proving
  Lemma~\ref{lem:closed-image}. The essential point is to exclude
  the case $t\in A$, where $d_t$ is a metric but does not induce
  the discrete topology. This is precisely what (C3) rules out.
\end{proof}

\begin{theorem}[Ishiki's Conjecture~5.1]
  \label{thm:conj51}
  If $\lvert D\rvert = \aleph_1$, then $\Met(D)$, with the supremum-metric
  topology, is not completely metrizable.
\end{theorem}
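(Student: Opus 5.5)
The plan is to argue by contradiction, assembling the lemmas already established. First I reduce to the specific space $D$ built in the proof of Lemma~\ref{lem:star-metric}. Any discrete space $D'$ with $\lvert D'\rvert=\aleph_1$ is in bijection with that $D$. As noted after Lemma~\ref{lem:star-metric}, relabelling along such a bijection gives an isometric isomorphism $\Met(D')\cong\Met(D)$. Complete metrizability is a topological property, so it suffices to show that $\Met(D)$ is not completely metrizable for this particular $D$. Here $D$ is constructed from the fixed set $A$ of Lemma~\ref{lem:A-exists}.

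Next, suppose for contradiction that $\Met(D)$ is completely metrizable. By Lemma~\ref{lem:closed-image}, the image $\varphi[A^\complement]$ is closed in $\Met(D)$. Fact~\ref{thm:background-closed} then makes $\varphi[A^\complement]$ completely metrizable in its subspace topology. Lemma~\ref{lem:closed-image} also says that $\varphi$ is a bi-Lipschitz embedding, hence a homeomorphism of $A^\complement$ onto $\varphi[A^\complement]$. Complete metrizability transfers along homeomorphisms, so $A^\complement$, with the subspace topology from $[0,1]$, is completely metrizable.

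Then I apply Fact~\ref{thm:background-gdelta} with $X=[0,1]$ and $S=A^\complement$. This shows $A^\complement$ is a $\Gdelta$ subset of $[0,1]$, say $A^\complement=\bigcap_n U_n$ with each $U_n$ open. Taking complements in $[0,1]$ by De Morgan gives $A=\bigcup_n([0,1]\setminus U_n)$, a countable union of closed sets. So $A$ is $\Fsigma$, contradicting Lemma~\ref{lem:A-exists}. This contradiction proves the theorem.

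The substantive obstacles all lie in the earlier lemmas, chiefly the closedness of the image in Lemma~\ref{lem:closed-image}, and these may be assumed. The only point requiring care in this final assembly is to keep track of topologies. The complete metrizability obtained for $\varphi[A^\complement]$ is for its subspace topology in $\Met(D)$, which is topologised by $\rho$ or equivalently by $\min(\rho,1)$ (Remark~\ref{rem:extended-rho}). That this topology matches the topology $A^\complement$ inherits from $[0,1]$ is exactly what the bi-Lipschitz estimate (C4) guarantees.
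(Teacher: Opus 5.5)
Your proposal is correct and follows essentially the same route as the paper. You embed $A^\complement$ as a closed subspace of $\Met(D)$ via Lemma~\ref{lem:closed-image}, pass complete metrizability to it by Fact~\ref{thm:background-closed}, and then use Fact~\ref{thm:background-gdelta} together with De Morgan to contradict Lemma~\ref{lem:A-exists}; the only difference is that you place inside the proof the reduction to the particular $D$ via the relabelling isometry, which the paper states just after Lemma~\ref{lem:star-metric}.
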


\begin{proof}
  By Lemma~\ref{lem:A-exists}, choose $A\subseteq[0,1]$ with
  $|A|=\aleph_1$ that is not $\Fsigma$. Then $A^\complement$ is not
  $\Gdelta$ in $[0,1]$, and hence is not completely metrizable by
  Fact~\ref{thm:background-gdelta}.
  By Lemmas~\ref{lem:star-metric} and \ref{lem:closed-image},
  $A^\complement$ is homeomorphic to the closed subspace
  $\varphi[A^\complement]$ of $\Met(D)$. If $\Met(D)$ were completely
  metrizable, then so would be $\varphi[A^\complement]$, and consequently
  $A^\complement$, a contradiction.
\end{proof}

\section{Concluding Remarks}
\label{sec:concluding}

Koshino \cite{Koshino2024} proved that, for a separable metrizable space
$X$, the space of bounded admissible metrics on $X$, equipped with the
supremum metric, is completely metrizable if and only if $X$ is
$\sigma$-compact. Ishiki \cite{Ishiki2024} stated the corresponding
characterisation for $\Met(X)$ and formulated Conjecture~5.1 for a
discrete space of cardinality $\aleph_1$. Koshino's proof also uses a
one-parameter family of metrics to embed a rational subspace
$Q'\subseteq[0,1]$ as a closed subspace of the space of bounded
admissible metrics. Our construction has a similar form, with
$A^\complement$ in place of $Q'$.

No additional set-theoretic assumption, such as the continuum
hypothesis, is required in the proof of Theorem~\ref{thm:conj51}. The
proof has also been formalised in Lean~4 using Mathlib.

% ------------------------------------------------------------------
\section*{Acknowledgements}

The author is grateful to Yoshito Ishiki for helpful comments on an
earlier version of this manuscript.

% ------------------------------------------------------------------
\bibliographystyle{plain}
\bibliography{references}

\end{document}